\numberwithin{equation}{section}
\newtheorem{theorem}{Theorem}[section]
\newtheorem{definition}[theorem]{Definition}
\newtheorem{proposition}[theorem]{Proposition}
\theoremstyle{definition}
\newtheorem{remark}[theorem]{Remark}
\title{\textbf{Gelfand-Kirillov dimension for rings}}
\author{Oswaldo Lezama\\
\texttt{jolezamas@unal.edu.co}
\\Helbert Venegas\\
\texttt{hjvenegasr@unal.edu.co}
\\ Seminario de Álgebra Constructiva - SAC$^2$\\ Departamento de Matemáticas\\ Universidad Nacional de
Colombia, Sede Bogotá}
\date{}
\begin{document}
\maketitle
\begin{abstract}
\noindent The classical Gelfand-Kirillov dimension for algebras
over fields has been extended recently by J. Bell and J.J Zhang to
algebras over commutative domains. However, the behavior of this
new notion has not been enough investigated for the principal
algebraic constructions as polynomial rings, matrix rings,
localizations, filtered-graded rings, skew $PBW$ extensions, etc.
In this paper we present complete proofs of the computation of
this more general dimension for the mentioned algebraic
constructions for algebras over commutative domains. The
Gelfand-Kirillov dimension for modules and the Gelfand-Kirillov
transcendence degree will be also considered. The obtained results
can be applied in particular to algebras over the ring of
integers, i.e, to arbitrary rings.

\bigskip

\noindent \textit{Key words and phrases.} Gelfand-Kirillov
dimension, Gelfand-Kirillov transcendence degree.

\bigskip

\noindent 2010 \textit{Mathematics Subject Classification.}
Primary: 16P90. Secondary: 16S80, 16S85, 16W70, 16S36.
\end{abstract}


\section{Introduction}

Let $R$ be a commutative domain and $Q$ be the field of fractions
of $R$. Let $B$ be a $R$-algebra, then $Q\otimes B$ is a
$Q$-algebra and its classical Gelfand-Kirillov dimension is
denoted by ${\rm GKdim} (Q\otimes B)$ (see \cite{Krause}). Recall
that if $M$ is a finitely generated $R$-module, then the
\textit{rank} of $M$ is defined by
\begin{center}
${\rm rank}M:=\dim_Q(Q\otimes_R M)<\infty$.
\end{center}
From now on in this paper all tensors are over $R$.

\begin{definition}[\cite{BellZhang}, Section 1]\label{definitionGKforrings}
Let $R$ be a commutative domain and $B$ be a $R$-algebra. The
Gelfand-Kirillov dimension of $B$ is defined to be
\begin{equation}
{\rm GKdim}(B):=\sup_{V}\overline{\lim_{n\to \infty}}\log_n{\rm
rank} V^n,
\end{equation}
where $V$ varies over all frames of $B$ and $V^n:=\, _R\langle
v_1\cdots v_n\mid v_i\in V, 1\leq i\leq n\rangle$; a frame of $B$
is a finitely generated $R$-submodule of $B$ containing $1$. A
frame $V$ generates $B$ if $B$ is generated by $V$ as $R$-algebra.
\end{definition}

\begin{proposition}[\cite{BellZhang}, Lemma 3.1]\label{proposition17.3.8}
Let $B$ be a $R$-algebra. Then,
\begin{center}
${\rm GKdim}(B)={\rm GKdim}(Q\otimes B)$.
\end{center}
\end{proposition}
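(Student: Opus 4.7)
The plan is to build a tight correspondence between frames of $B$ and finite-dimensional $Q$-subspaces of $Q\otimes B$ containing $1$. Since $R$ is a domain, $Q$ is a localization of $R$ and hence a flat $R$-module; therefore for any $R$-submodule $M\hookrightarrow B$ the induced map $Q\otimes M\to Q\otimes B$ is injective, and I may identify $Q\otimes M$ with the $Q$-subspace $W_M:=Q\cdot(1\otimes M)$ of $Q\otimes B$. When $M$ is finitely generated, $\dim_Q W_M=\mathrm{rank}(M)$.

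For the inequality $\mathrm{GKdim}(B)\le\mathrm{GKdim}(Q\otimes B)$, I would start with a frame $V$ of $B$ and verify the multiplicative identity $W_{V^n}=(W_V)^n$ inside $Q\otimes B$ by the straightforward computation $(q_1\otimes v_1)\cdots(q_n\otimes v_n)=q_1\cdots q_n\otimes v_1\cdots v_n$. Since $V^n$ is finitely generated as an $R$-module by the finitely many length-$n$ products of generators of $V$, this yields $\dim_Q(W_V)^n=\mathrm{rank}(V^n)$. Because $W_V$ is a finite-dimensional $Q$-subspace of $Q\otimes B$ containing $1\otimes 1=1$, it is admissible in the classical sup defining $\mathrm{GKdim}(Q\otimes B)$, and the inequality follows by taking $\limsup_n\log_n$ and then the sup over all frames $V$.

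For the reverse direction, given any finite-dimensional $Q$-subspace $W\subseteq Q\otimes B$ with $1\in W$, I would pick a $Q$-basis $w_1,\ldots,w_m$ and write each $w_i=\sum_j q_{ij}\otimes b_{ij}$ with $b_{ij}\in B$, $q_{ij}\in Q$. Let $V$ be the $R$-submodule of $B$ generated by $1$ together with all the $b_{ij}$; then $V$ is a frame of $B$, and by construction $w_i\in W_V$, so $W\subseteq W_V$. Multiplying, $W^n\subseteq(W_V)^n=W_{V^n}$, hence $\dim_Q W^n\le\mathrm{rank}(V^n)$. Applying $\limsup_n\log_n$ and then taking the sup over $W$ produces $\mathrm{GKdim}(Q\otimes B)\le\mathrm{GKdim}(B)$.

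The main technical point to watch is the identification of $Q\otimes V^n$ with a subspace of $Q\otimes B$: this relies on flatness of $Q$ over $R$, and is what keeps the \textbf{rank} on the $R$-side compatible with the $Q$-\textbf{dimension} on the $Q\otimes B$-side at every growth level $n$. Apart from that, the proof is essentially an exercise in denominator-clearing and in the compatibility of tensor products with algebra multiplication.
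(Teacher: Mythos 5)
Your proposal is correct and follows essentially the same route as the paper's proof: pass from a frame $V$ of $B$ to the frame $Q\otimes V$ of $Q\otimes B$ using $Q\otimes V^n=(Q\otimes V)^n$ for one inequality, and for the other clear denominators in a frame $W$ of $Q\otimes B$ to produce a frame $V_W$ of $B$ with $W\subseteq Q\otimes V_W$. Your explicit appeal to flatness of $Q$ to identify $Q\otimes V^n$ with a subspace of $Q\otimes B$ just makes precise an identification the paper uses implicitly.
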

\begin{proof}
Let $V$ be a frame of $B$, then $Q\otimes V$ is a frame of the
$Q$-algebra $Q\otimes B$. In fact, if $V=\,_R\langle
v_1,\dots,v_m\rangle$, then $Q\otimes V=\,_Q\langle 1\otimes
v_1,\dots,1\otimes v_m \rangle$ and $1\otimes 1\in Q\otimes V$.
Observe that for every $n\geq 0$,
\begin{center}
$Q\otimes V^n=(Q\otimes V)^n$,
\end{center}
hence
\begin{center}
${\rm GKdim}(B)=\sup_{V}\overline{\lim_{n\to \infty}}\log_n{\rm
rank} V^n=\sup_{V}\overline{\lim_{n\to
\infty}}\log_n(\dim_Q(Q\otimes V^n))=\sup_{V}\overline{\lim_{n\to
\infty}}\log_n(\dim_Q(Q\otimes V)^n)\leq {\rm GKdim}(Q\otimes B)$.
\end{center}
Now let $W$ be a frame of $Q\otimes B$, then there exist finitely
many $v_1,\dots,v_m\in B$ such that $V_W:=\,_R\langle
1,v_1,\dots,v_m\rangle$ is a frame of $B$ and $W\subseteq Q\otimes
V_W$. Indeed, let $W=\,_Q\langle z_1,\dots,z_k\rangle$, then for
every $1\leq i\leq k$,
\begin{center}
$z_i=q_{i1}\otimes v_{i1}+\cdots +q_{im_i}\otimes v_{im_i}$,
\end{center}
with $q_{ij}\in Q$ and $v_{ij}\in B$; hence, $W\subseteq\,
_Q\langle 1\otimes v_{11},\dots,1\otimes v_{km_k}\rangle$ and we
have found elements $v_1,\dots,v_m\in B$ such that
\begin{center}
$W\subseteq\,_Q\langle 1\otimes 1,1\otimes v_1,\dots,1\otimes v_m
\rangle\subseteq Q\otimes V_W$,
\end{center}
with $V_W:=\,_R\langle 1,v_1,\dots,v_m\rangle$. This proves the
claimed. Therefore,
\begin{center}
${\rm GKdim}(Q\otimes B)=\sup_{W}\overline{\lim_{n\to
\infty}}\log_n\dim_Q W^n\leq \sup_{V_W}\overline{\lim_{n\to
\infty}}\log_n\dim_Q (Q\otimes
V_W)^n=\sup_{V_W}\overline{\lim_{n\to \infty}}\log_n\dim_Q
(Q\otimes V_W^n)=\sup_{V_W}\overline{\lim_{n\to \infty}}\log_n{\rm
rank} V_W^n\leq {\rm GKdim}(B)$.
\end{center}
\end{proof}
Observe that $B$ is finitely generated if and only if $B$ has a
generator frame. In fact, if $x_1,\dots,x_m$ generate $B$ as
$R$-algebra, then $V:=\,_R\langle 1,x_1,\dots,x_m\rangle$ is a
generator frame of $B$; the converse is trivial from Definition
\ref{definitionGKforrings}.

\begin{proposition}
Let $B$ be a $R$-algebra and $V$ be a frame that generates $B$.
Then,
\begin{equation*}
{\rm GKdim}(B)=\overline{\lim_{n\to \infty}}\log_n({\rm rank}V^n).
\end{equation*}
Moreover, this equality is independent of the generator frame.
\end{proposition}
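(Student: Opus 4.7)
The plan is to reduce the claim to the classical case of algebras over a field via Proposition~\ref{proposition17.3.8}. First I would observe that if $V$ is a generator frame of $B$, then $Q\otimes V$ is a \emph{generator} frame of the $Q$-algebra $Q\otimes B$: one already knows from the proof of Proposition~\ref{proposition17.3.8} that $Q\otimes V$ is a frame, and the image of $V$ in $Q\otimes B$ lies inside $Q\otimes V$, so it generates $Q\otimes B$ as a $Q$-algebra because $V$ generates $B$ as an $R$-algebra.

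Next I would invoke the classical Gelfand-Kirillov theory over fields (see \cite{Krause}, Lemma~1.1): for a finitely generated algebra over a field, the supremum in the definition of ${\rm GKdim}$ is attained at every generating frame, so in particular
\begin{equation*}
{\rm GKdim}(Q\otimes B)=\overline{\lim_{n\to\infty}}\log_n\dim_Q(Q\otimes V)^n.
\end{equation*}
Combining this with Proposition~\ref{proposition17.3.8} and the identity $(Q\otimes V)^n=Q\otimes V^n$ already established in the proof of that proposition yields
\begin{equation*}
{\rm GKdim}(B)={\rm GKdim}(Q\otimes B)=\overline{\lim_{n\to\infty}}\log_n\dim_Q(Q\otimes V^n)=\overline{\lim_{n\to\infty}}\log_n{\rm rank}\,V^n,
\end{equation*}
where the last step uses ${\rm rank}\,V^n=\dim_Q(Q\otimes V^n)$ from the definition of rank recalled in the introduction.

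For the independence assertion, there is nothing extra to prove: the right-hand side has just been identified with ${\rm GKdim}(B)$, which is defined intrinsically and therefore does not depend on the particular generator frame chosen. The only genuine verification in the whole argument, and hence the main (mild) obstacle, is the first step: checking that a generator frame of $B$ really transfers under the scalar extension to a generator frame of $Q\otimes B$, so that the classical result of Krause-Lenagan is applicable. Everything else is a bookkeeping transfer along $B\rightsquigarrow Q\otimes B$.
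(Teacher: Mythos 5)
Your argument is correct and essentially identical to the paper's proof: both pass to $Q\otimes B$ via Proposition~\ref{proposition17.3.8}, note that $Q\otimes V$ is a generating frame of the $Q$-algebra $Q\otimes B$, invoke the classical fact that the Gelfand-Kirillov dimension over a field is computed by any generating frame, and translate back using $(Q\otimes V)^n=Q\otimes V^n$ and ${\rm rank}\,V^n=\dim_Q(Q\otimes V^n)$. The independence remark is handled the same way in the paper, so there is nothing to add.
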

\begin{proof}
Notice that $Q\otimes V$ generates $Q\otimes B$, so from
Proposition \ref{proposition17.3.8}
\begin{center}
${\rm GKdim}(B)={\rm GKdim}(Q\otimes B)=\overline{\lim_{n\to
\infty}}\log_n(\dim_Q (Q\otimes V)^n)=\overline{\lim_{n\to
\infty}}\log_n(\dim_Q (Q\otimes V^n))=\overline{\lim_{n\to
\infty}}\log_n{\rm rank} V^n$.
\end{center}

The second part is trivial since the proof was independent of the
chosen frame.
\end{proof}

We conclude this preliminary section recalling the definition of
the skew $PBW$ extensions that we will consider in the main
theorem of the next section. Skew $PBW$ extensions cover many
noncommutative rings and algebras coming from mathematical physics
(see \cite{Lezama-reyes}). In \cite{Reyes5} the Gelfand-Kirillov
dimension of skew $PBW$ extensions that are algebras over fields
was computed. In the next section this result will be extended to
the case of algebras over commutative domains, i.e., for arbitrary
rings that are skew $PBW$ extensions. Another generalization of
the Gelfand-Kirillov dimension for skew $PBW$ extensions was
studied in \cite{lezamalatorre}, interpreting these extensions as
finitely semi-graded rings. The computation presented here agree
with \cite{Reyes5} and \cite{lezamalatorre}, choosing properly the
ring of coefficients of the extension.

\begin{definition}[\cite{LezamaGallego}]\label{gpbwextension}
Let $R$ and $A$ be rings. We say that $A$ is a \textit{skew $PBW$
extension of $R$} $($also called a $\sigma-PBW$ extension of
$R$$)$ if the following conditions hold:
\begin{enumerate}
\item[\rm (i)]$R\subseteq A$.
\item[\rm (ii)]There exist finitely many elements $x_1,\dots ,x_n\in A$ such $A$ is a left $R$-free module with basis
\begin{center}
${\rm Mon}(A):= \{x^{\alpha}=x_1^{\alpha_1}\cdots
x_n^{\alpha_n}\mid \alpha=(\alpha_1,\dots ,\alpha_n)\in
\mathbb{N}^n\}$, with $\mathbb{N}:=\{0,1,2,\dots\}$.
\end{center}
The set ${\rm Mon}(A)$ is called the set of standard monomials of
$A$.
\item[\rm (iii)]For every $1\leq i\leq n$ and $r\in R-\{0\}$ there exists $c_{i,r}\in R-\{0\}$ such that
\begin{equation}\label{sigmadefinicion1}
x_ir-c_{i,r}x_i\in R.
\end{equation}
\item[\rm (iv)]For every $1\leq i,j\leq n$ there exists $c_{i,j}\in R-\{0\}$ such that
\begin{equation}\label{sigmadefinicion2}
x_jx_i-c_{i,j}x_ix_j\in R+Rx_1+\cdots +Rx_n.
\end{equation}
Under these conditions we will write $A:=\sigma(R)\langle
x_1,\dots ,x_n\rangle$.
\end{enumerate}
\end{definition}
Associated to a skew $PBW$ extension $A=\sigma(R)\langle x_1,\dots
,x_n\rangle$ there are $n$ injective endomorphisms
$\sigma_1,\dots,\sigma_n$ of $R$ and $\sigma_i$-derivations, as
the following proposition shows.

\begin{proposition}[\cite{LezamaGallego}, Proposition 3]\label{sigmadefinition}
Let $A$ be a skew $PBW$ extension of $R$. Then, for every $1\leq
i\leq n$, there exist an injective ring endomorphism
$\sigma_i:R\rightarrow R$ and a $\sigma_i$-derivation
$\delta_i:R\rightarrow R$ such that
\begin{center}
$x_ir=\sigma_i(r)x_i+\delta_i(r)$,
\end{center}
for each $r\in R$.
\end{proposition}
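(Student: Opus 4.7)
The plan is to extract $\sigma_i$ and $\delta_i$ directly from condition (iii) of Definition \ref{gpbwextension}, relying on the fact that ${\rm Mon}(A)$ is a left $R$-basis of $A$ to guarantee that the decomposition of $x_ir$ as an element of $R + Rx_i$ is unique.

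For each fixed $i$ and each $r\in R-\{0\}$, equation (\ref{sigmadefinicion1}) gives an element $c_{i,r}\in R-\{0\}$ and an element $s\in R$ with $x_ir = c_{i,r}x_i + s$. Since $\{1,x_i\}\subseteq {\rm Mon}(A)$ is $R$-linearly independent, both $c_{i,r}$ and $s$ are uniquely determined by $r$. I would therefore define $\sigma_i(r):=c_{i,r}$ and $\delta_i(r):=s$ for $r\neq 0$, and extend by $\sigma_i(0):=0$, $\delta_i(0):=0$, so that $x_ir=\sigma_i(r)x_i+\delta_i(r)$ holds uniformly in $r\in R$.

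The bulk of the argument is then to verify the algebraic identities by expanding $x_i(r+r')$ and $x_i(rr')$ in two ways and equating coefficients on the basis $\{1,x_i\}$. Additivity $x_i(r+r')=x_ir+x_ir'$ forces $\sigma_i(r+r')=\sigma_i(r)+\sigma_i(r')$ and $\delta_i(r+r')=\delta_i(r)+\delta_i(r')$. For multiplicativity, I would write
\begin{equation*}
x_i(rr')=(x_ir)r'=\sigma_i(r)(x_ir')+\delta_i(r)r'=\sigma_i(r)\sigma_i(r')x_i+\bigl(\sigma_i(r)\delta_i(r')+\delta_i(r)r'\bigr),
\end{equation*}
and, comparing with $x_i(rr')=\sigma_i(rr')x_i+\delta_i(rr')$, read off $\sigma_i(rr')=\sigma_i(r)\sigma_i(r')$ together with the $\sigma_i$-Leibniz rule $\delta_i(rr')=\sigma_i(r)\delta_i(r')+\delta_i(r)r'$. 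The identity $x_i\cdot 1=1\cdot x_i+0$ yields $\sigma_i(1)=1$, so $\sigma_i$ is a ring endomorphism and $\delta_i$ is a $\sigma_i$-derivation.

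Injectivity of $\sigma_i$ is the last and easiest point: condition (iii) insists that $c_{i,r}\in R-\{0\}$ whenever $r\neq 0$, so $\sigma_i(r)=0$ immediately forces $r=0$. I do not anticipate any genuine obstacle in this proof; the whole argument is driven by uniqueness of expansion in the $R$-basis ${\rm Mon}(A)$, and the only point requiring minor care is treating the value at $0$ separately so that additivity is not accidentally broken.
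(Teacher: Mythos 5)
Your proposal is correct: the paper does not actually prove this proposition (it is quoted from \cite{LezamaGallego}, Proposition 3), and your argument—uniqueness of coefficients in the free left $R$-basis ${\rm Mon}(A)$, defining $\sigma_i(r):=c_{i,r}$ and $\delta_i(r)$ as the constant term from condition (iii), checking the endomorphism and $\sigma_i$-Leibniz identities by expanding $x_i(r+r')$ and $x_i(rr')$, and getting injectivity from $c_{i,r}\neq 0$—is exactly the standard proof given in that reference. The care you take with the value at $0$ is the right touch, and the computation even handles the case of zero divisors in $R$ correctly, so there is no gap.
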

A particular case of skew $PBW$ extension is when all $\sigma_i$
are bijective and the constants $c_{ij}$ are invertible.

\bigskip

\begin{definition}[\cite{LezamaGallego}]\label{sigmapbwderivationtype}
Let $A$ be a skew $PBW$ extension. $A$ is bijective if $\sigma_i$
is bijective for every $1\leq i\leq n$ and $c_{i,j}$ is invertible
for any $1\leq i<j\leq n$.
\end{definition}

If $A=\sigma(R)\langle x_1,\dots,x_n\rangle$ is a skew $PBW$
extension of the ring $R$, then, as was observed in Proposition
\ref{sigmadefinition}, $A$ induces injective endomorphisms
$\sigma_k:R\to R$ and $\sigma_k$-derivations $\delta_k:R\to R$,
$1\leq k\leq n$. Moreover, from the Definition
\ref{gpbwextension}, there exists a unique finite set of constants
$c_{ij}, d_{ij}, a_{ij}^{(k)}\in R$, $c_{ij}\neq 0$, such that
\begin{equation}\label{equation1.2.1}
x_jx_i=c_{ij}x_ix_j+a_{ij}^{(1)}x_1+\cdots+a_{ij}^{(n)}x_n+d_{ij},
\ \text{for every}\  1\leq i<j\leq n.
\end{equation}

Many important algebras and rings coming from mathematical physics
are particular examples of skew $PBW$ extensions: Habitual ring of
polynomials in several variables, Weyl algebras, enveloping
algebras of finite dimensional Lie algebras, algebra of
$q$-differential operators, many important types of Ore algebras,
algebras of diffusion type, additive and multiplicative analogues
of the Weyl algebra, dispin algebra $\mathcal{U}(osp(1,2))$,
quantum algebra $\mathcal{U}'(so(3,K))$, Woronowicz algebra
$\mathcal{W}_{\nu}(\mathfrak{sl}(2,K))$, Manin algebra
$\mathcal{O}_q(M_2(K))$, coordinate algebra of the quantum group
$SL_q(2)$, $q$-Heisenberg algebra \textbf{H}$_n(q)$, Hayashi
algebra $W_q(J)$, differential operators on a quantum space
$D_{\textbf{q}}(S_{\textbf{q}})$, Witten's deformation of
$\mathcal{U}(\mathfrak{sl}(2,K))$, multiparameter Weyl algebra
$A_n^{Q,\Gamma}(K)$, quantum symplectic space
$\mathcal{O}_q(\mathfrak{sp}(K^{2n}))$, some quadratic algebras in
3 variables, some 3-dimensional skew polynomial algebras,
particular types of Sklyanin algebras, homogenized enveloping
algebra $\mathcal{A}(\mathcal{G})$, Sridharan enveloping algebra
of 3-dimensional Lie algebra $\mathcal{G}$, among many others. For
a precise definition of any of these rings and algebras see
\cite{Lezama-reyes}.


\section{Computation of the Gelfand-Kirillov dimension\\ for the principal algebraic constructions}

Next we will compute the Gelfand-Kirillov dimension for the most
important algebraic constructions. For this we will apply the
correspondent properties of the Gelfand-Kirillov dimension over
fields (see \cite{Krause}).

\begin{theorem}\label{theorem17.4.4}
Let $B$ be a $R$-algebra.
\begin{enumerate}
\item[\rm (i)]If $B$ is finitely generated, then
\begin{center}
${\rm GKdim}(B)=0$ if and only if ${\rm rank} B<\infty$.
\end{center}
Moreover, if $B$ is a domain with ${\rm GKdim}(B)<\infty$, then
$B$ is an Ore domain.
\item[\rm (ii)]${\rm GKdim}(B[x_1,\dots,x_m])={\rm GKdim}(B)+m$.
\item[\rm (iii)]For $m\geq 2$, ${\rm GKdim}(R\{x_1,\dots,x_m\})=\infty$.
\item[\rm (iv)]${\rm GKdim}(M_n(B))={\rm GKdim}(B)$.
\item[\rm (v)]If $I$ is a proper two-sided ideal of $B$, then ${\rm GKdim}(B/I)\leq {\rm
GKdim}(B)$.
\item[\rm (vi)]Let $C$ be a subalgebra of $B$. Then, ${\rm GKdim}(C)\leq {\rm
GKdim}(B)$. Moreover, if $C\subseteq Z(B)$ $($the center of $B$$)$
and $B$ is finitely generated as $C$-module, then ${\rm
GKdim}(C)={\rm GKdim}(B)$.
\item[\rm (vii)]Let $C$ be a $R$-algebra. Then,
\begin{center}
${\rm GKdim}(B\times C)=\max\{{\rm GKdim}(B),{\rm GKdim}(C)\}$.
\end{center}
\item[\rm (viii)]Let $C$ be a $R$-algebra. Then,
\begin{center}
$\max\{{\rm GKdim}(B),{\rm GKdim}(C)\}\leq {\rm GKdim}(B\otimes
C)\leq {\rm GKdim}(B)+{\rm GKdim}(C)$.
\end{center}
In addition, suppose that $C$ contains a finitely generated
subalgebra $C_0$ such that ${\rm GKdim}(C_0)={\rm GKdim}(C)$, then
\begin{center}
${\rm GKdim}(B\otimes C)={\rm GKdim}(B)+{\rm GKdim}(C)$.
\end{center}
\item[\rm (ix)]Let $S$ be a multiplicative system of $B$ consisting of central regular elements. Then,
\begin{center}
${\rm GKdim}(BS^{-1})={\rm GKdim}(B)$.
\end{center}
\item[\rm (x)]If $G$ is a finite group, then
\[
{\rm GKdim}(B[G])={\rm GKdim}(B).
\]
\item[\rm (xi)]Let $B$ be $\mathbb{N}$-filtered and locally finite, i.e., for every $p\in \mathbb{N}$, $F_p(B)$ is a finitely generated $R$-submodule
of $B$. If $Gr(B)$ is finitely generated, then
\[
{\rm GKdim}(Gr(B))={\rm GKdim}(B).
\]
\item[\rm (xii)]Suppose that $B$ has a generator frame $V$. If $\sigma$
is a $R$-linear automorphism of $B$ such that $\sigma(V)\subseteq
V$ and $\delta$ is a $R$-linear $\sigma$-derivation of $B$, then
\[
{\rm GKdim}(B[x;\sigma,\delta])={\rm GKdim}(B)+1.
\]
\item[\rm (xiii)]Suppose that $B$ has a generator frame $V$ and let $A=\sigma(B)\langle x_1,\dotsc,x_t\rangle$ be
a bijective skew $PBW$ extension of $B$. If for $1\leq i\leq t$,
$\sigma_i,\delta_i$ are $R$-linear and $\sigma_i(V)\subseteq V$,
then
\[
{\rm GKdim}(A)={\rm GKdim}(B)+t.
\]
\end{enumerate}
\end{theorem}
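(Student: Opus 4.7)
The plan is to reduce to the field-algebra case and apply the known formula of \cite{Reyes5}. By Proposition \ref{proposition17.3.8} we have ${\rm GKdim}(A)={\rm GKdim}(Q\otimes A)$ and ${\rm GKdim}(B)={\rm GKdim}(Q\otimes B)$, where $Q$ denotes the fraction field of $R$; hence it suffices to prove the equality ${\rm GKdim}(Q\otimes A)={\rm GKdim}(Q\otimes B)+t$ for the $Q$-algebras obtained by base change.

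The first step is to verify that $Q\otimes A$ is a bijective skew $PBW$ extension of $Q\otimes B$ with generators $1\otimes x_1,\ldots,1\otimes x_t$. Since $A$ is a free left $B$-module with basis ${\rm Mon}(A)=\{x^{\alpha}\}$, flatness of $Q$ as an $R$-module yields that $Q\otimes A$ is a free left $(Q\otimes B)$-module with basis $\{1\otimes x^{\alpha}\}$. The hypothesis that $\sigma_i$ and $\delta_i$ are $R$-linear is precisely what is needed for the maps $1\otimes\sigma_i$ and $1\otimes\delta_i$ to be well-defined on $Q\otimes B$, and flatness promotes bijectivity of $\sigma_i$ on $B$ to bijectivity of $1\otimes\sigma_i$ on $Q\otimes B$. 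The defining relations \eqref{sigmadefinicion1}, \eqref{sigmadefinicion2} and \eqref{equation1.2.1} transfer verbatim with constants $1\otimes c_{i,r}$, $1\otimes c_{ij}$, $1\otimes a_{ij}^{(k)}$, $1\otimes d_{ij}$, and invertibility of $c_{ij}$ in $B$ passes to invertibility of $1\otimes c_{ij}$ in $Q\otimes B$, so $Q\otimes A$ is indeed a bijective skew $PBW$ extension of $Q\otimes B$.

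The second step is a direct application of the result for bijective skew $PBW$ extensions over field algebras. Since $V$ is a generator frame of $B$, the tensor $Q\otimes V$ is a generator frame of the $Q$-algebra $Q\otimes B$, and the assumption $\sigma_i(V)\subseteq V$ transfers to $(1\otimes\sigma_i)(Q\otimes V)\subseteq Q\otimes V$. Hence \cite{Reyes5} applies and gives ${\rm GKdim}(Q\otimes A)={\rm GKdim}(Q\otimes B)+t$, which combined with Proposition \ref{proposition17.3.8} yields the claim.

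The main obstacle lies in the base-change argument of the first step: guaranteeing that the full structural data of a bijective skew $PBW$ extension, and not just the free-module structure, survives tensoring with $Q$. The crucial inputs there are the $R$-linearity of $\sigma_i,\delta_i$ (so that they extend to $Q\otimes B$) and the flatness of $Q$ over $R$ (which preserves bijectivity and invertibility of $R$-linear data). An alternative self-contained route would pass through the total-degree filtration: by \eqref{equation1.2.1} the associated graded ring $Gr(A)$ is generated over $B$ by principal symbols $\overline{x}_i$ satisfying $\overline{x}_j\overline{x}_i=c_{ij}\overline{x}_i\overline{x}_j$, so $Gr(A)$ is an iterated Ore extension of $B$ with zero derivations; iterating part (xii) would then give ${\rm GKdim}(Gr(A))={\rm GKdim}(B)+t$, and part (xi) would conclude ${\rm GKdim}(A)={\rm GKdim}(Gr(A))$, with the delicate point being the propagation of an appropriate generator frame through the iterative Ore-extension construction.
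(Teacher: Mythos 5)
Your proposal addresses only part (xiii) of the theorem. Parts (i)--(xii) are not touched at all, so as a proof of the statement as given it is incomplete. (In the paper most of those parts are indeed obtained by the same device you use, namely ${\rm GKdim}(B)={\rm GKdim}(Q\otimes B)$ from Proposition \ref{proposition17.3.8} combined with the classical field-case facts from \cite{Krause}, but some of them, e.g.\ the Ore-domain assertion in (i), need a separate frame argument, and all of them are part of the claim you were asked to prove.)

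For (xiii) itself your argument is correct in outline, but it takes a genuinely different route from the paper. The paper is deliberately self-contained: it normalizes the generator frame $V$ so that $\delta_i(V)\subseteq V^2$ and all structure constants of \eqref{equation1.2.1} lie in $V$, introduces $X=\,_R\langle 1,x_1,\dots,x_t\rangle$ and $X_n$, proves $X_n\subseteq X^n\subseteq V^{n-1}X_n$ and $W^n\subseteq V^nX^n\subseteq W^{2n}$ for $W=V+X$, and squeezes ${\rm GKdim}(A)$ between ${\rm GKdim}(B)+t$ from both sides using the estimates $(n/(n-1))^t\leq \dim_R X_n\leq (n+1)^t$ and ${\rm rank}\,V^{2n}X_n\leq (n+1)^t\,{\rm rank}\,V^{2n}$. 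You instead base-change to $Q$ and invoke \cite{Reyes5}: you verify that $Q\otimes A$ is a bijective skew $PBW$ extension of $Q\otimes B$ (the essential inputs --- $R$-linearity of $\sigma_i,\delta_i$ so they extend, flatness of $Q$ to preserve injectivity and the free-basis ${\rm Mon}$, invertibility of the $1\otimes c_{ij}$ --- are correctly identified), that $Q\otimes V$ is a generator frame of $Q\otimes B$ with $(1\otimes\sigma_i)(Q\otimes V)\subseteq Q\otimes V$, and then apply the field-case formula. This is legitimate and shorter, provided the theorem of \cite{Reyes5} is quoted with exactly the hypotheses you verify (finitely generated coefficient algebra over the field, bijectivity, $Q$-linear $\sigma_i,\delta_i$, $\sigma_i$-stable generator frame); what it buys is brevity, and what it gives up is the self-contained computation over $R$ that is the stated purpose of the paper (whose proof essentially redoes Reyes's argument with ${\rm rank}$ in place of $\dim_K$). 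The associated-graded alternative you sketch at the end would additionally require local finiteness of the filtration, finite generation of $Gr(A)$, and frame stability at each Ore step, as you note, but since it is offered only as an aside it does not affect the assessment.
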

\begin{proof}
(i) Notice that $Q\otimes B$ is finitely generated, so ${\rm
GKdim}(Q\otimes B)=0$ if and only if $\dim_Q(Q\otimes B)<\infty$.
Since ${\rm rank}B=\dim_Q(Q\otimes B)$, we get
\begin{center}
${\rm GKdim}(B)=0$ if and only if ${\rm GKdim}(Q\otimes B)=0$ if
and only if ${\rm rank}B<\infty$.
\end{center}
For the second statement, we will show that $B$ is a right Ore
domain, the proof on the left side is similar. Suppose contrary
that there exist $0\neq s\in B$ and $b\in B$ such that $sB\cap
bB=0$. Since $B$ is a domain, the following sum is direct
\begin{center}
$bB+sbB+s^{2}bB+s^{3}bB+\cdots $.
\end{center}
Let $V$ be a frame of $B$, then $W_V:=\,_R\langle V,s,b\rangle$ is
a frame of $B$ and
\begin{center}
$W_V^{2n}\supseteq\, _R\langle V,s,b\rangle^nV^n\supseteq
bV^n+sbV^n+s^2bV^n+\cdots+s^{n-1}bV^n$
\end{center}
and since the last sum is direct, then
\begin{center}
${\rm GKdim}(B)\geq \sup_{W_V}\overline{\lim_{n\to
\infty}}\log_n{\rm rank} W_V^{2n}\geq \sup_{V}\overline{\lim_{n\to
\infty}}\log_n{n\rm rank} V^n=1+{\rm GKdim}(B)$,
\end{center}
but this is impossible since ${\rm GKdim}(B)<\infty$.

\noindent (ii) It is enough to consider the case $m=1$. We have
the isomorphism of $R$-algebras $B[x]\cong B\otimes R[x]$, whence
we have the following isomorphism of $Q$-algebras:
\begin{center}
$Q\otimes B[x]\cong Q\otimes (B\otimes R[x])\cong (Q\otimes
B)\otimes R[x]\cong (Q\otimes B)[x]$.
\end{center}
Therefore,
\begin{center}
${\rm GKdim}(B[x])={\rm GKdim}(Q\otimes B[x])={\rm
GKdim}((Q\otimes B)[x])={\rm GKdim}(Q\otimes B)+1={\rm
GKdim}(B)+1$.
\end{center}
(iii) We have the isomorphism of $Q$-algebras
\begin{align*}
Q\otimes (R\{x_1,\dots,x_m\}) & \cong Q\{x_1,\dots,x_m\}\\
q\otimes \sum_{\alpha}r_\alpha x^\alpha & \mapsto
\sum_{\alpha}(qr_\alpha)x^\alpha.
\end{align*}
Therefore,
\begin{center}
${\rm GKdim}(R\{x_1,\dots,x_m\})={\rm GKdim}(Q\otimes
(R\{x_1,\dots,x_m\}))={\rm GKdim}(Q\{x_1,\dots,x_m\})=\infty$.
\end{center}
(iv) We have the isomorphism of $R$-algebras $M_n(B)\cong B\otimes
M_n(R)$, whence we have the following isomorphism of $Q$-algebras:
\begin{center}
$Q\otimes M_n(B)\cong Q\otimes (B\otimes M_n(R))\cong (Q\otimes
B)\otimes M_n(R)\cong M_n(Q\otimes B)$.
\end{center}
Therefore,
\begin{center}
${\rm GKdim}(M_n(B))={\rm GKdim}(Q\otimes M_n(B))={\rm
GKdim}(M_n(Q\otimes B))={\rm GKdim}(Q\otimes B)={\rm GKdim}(B)$.
\end{center}
(v) Let $W$ be a frame of $B/I$, we can assume that $W=\,_R\langle
\overline{1},\overline{w_1},\dots,\overline{w_t}\rangle$, then
$V_W:=\,_R\langle 1, w_1,\dots,w_t\rangle$ is a frame of $B$, and
hence, $Q\otimes W$ is a frame of $Q\otimes (B/I)$ and $Q\otimes
V_W$ is a frame of $Q\otimes B$. Observe that for every $n\geq 0$,
\begin{center}
$W^n=\,_R\langle \overline{w_{i_1}}\cdots \overline{w_{i_n}}\mid
w_{i_j}\in \{1,w_1,\dots,w_t\}\rangle=\overline{V_W^n}$,

$Q\otimes W^n=Q\otimes \overline{V_W^n}$,

$\dim_Q(Q\otimes W^n)=\dim_Q(Q\otimes \overline{V_W^n})\leq
\dim_Q(Q\otimes V_W^n)$.
\end{center}
The inequality can by justified in the following way. Both
$Q$-vector spaces $Q\otimes \overline{V_W^n}$ and $Q\otimes V_W^n$
have finite dimension and we have the surjective homomorphism of
$Q$-vector spaces $Q\otimes V_W^n\to Q\otimes \overline{V_W^n}$,
$q\otimes z\mapsto q\otimes \overline{z}$, with $q\in Q$, $z\in
V_W^n$.

Therefore, ${\rm GKdim}(B/I)\leq {\rm GKdim}(B)$.

\noindent (vi) Since $Q$ is $R$-flat, then $Q\otimes C$ is a
$Q$-subalgebra of $Q\otimes B$, hence
\begin{center}
${\rm GKdim}(C)={\rm GKdim}(Q\otimes C)\leq {\rm GKdim}(Q\otimes B
)={\rm GKdim}(B)$.
\end{center}
For the second statement, $Q\otimes C$ is a $Q$-subalgebra of
$Q\otimes Z(B)\subseteq Z(Q\otimes B)$; moreover, if
$B=\,_C\langle b_1,\dots,b_t\rangle$, with $b_i\in B$, $1\leq
i\leq t$, then
\begin{center}
$Q\otimes B=\,_{Q\otimes C}\langle 1\otimes b_1,\dots,1\otimes
b_t\rangle$.
\end{center}
Therefore,
\begin{center}
${\rm GKdim}(C)={\rm GKdim}(Q\otimes C)={\rm GKdim}(Q\otimes
B)={\rm GKdim}(B)$.
\end{center}
(vii) We have the following isomorphism of $Q$-algebras
\begin{align*}
Q\otimes (B\times C)& \cong (Q\otimes B)\times (Q\otimes C)\\
q\otimes (b,c) & \mapsto (q\otimes b,q\otimes c).
\end{align*}
Hence,
\begin{center}
${\rm GKdim}(B\times C)={\rm GKdim}(Q\otimes (B\times C))={\rm
GKdim}((Q\otimes B)\times (Q\otimes C))=\max\{{\rm GKdim}(Q\otimes
B),{\rm GKdim}(Q\otimes C)\}=\max\{{\rm GKdim}(B),{\rm
GKdim}(C)\}$.
\end{center}
(viii) We have the following isomorphisms of $Q$-algebras
\begin{center}
$(Q\otimes B)\otimes (Q\otimes C)\cong Q\otimes (B\otimes
Q)\otimes C\cong (Q\otimes Q)\otimes (B\otimes C)\cong Q\otimes
(B\otimes C)$.
\end{center}
Therefore,
\begin{center}
${\rm GKdim}(B\otimes C)={\rm GKdim}(Q\otimes (B\otimes C))={\rm
GKdim}((Q\otimes B)\otimes (Q\otimes C))\leq {\rm GKdim}(Q\otimes
B)+{\rm GKdim}(Q\otimes C)={\rm GKdim}(B)+{\rm GKdim}(C)$;

$\max\{{\rm GKdim}(B),{\rm GKdim}(C)\}=\max\{{\rm GKdim}(Q\otimes
B),{\rm GKdim}(Q\otimes C)\}\leq {\rm GKdim}((Q\otimes B)\otimes
(Q\otimes C))={\rm GKdim}(Q\otimes (B\otimes C))={\rm
GKdim}(B\otimes C)$.
\end{center}
For the second part, if $C$ contains a finitely generated
subalgebra $C_0$ such that ${\rm GKdim}(C_0)={\rm GKdim}(C)$, then
$Q\otimes C$ contains the finitely generated $Q$-algebra $Q\otimes
C_0$, ${\rm GKdim}(Q\otimes C_0)={\rm GKdim}(Q\otimes C)$, and
since $Q\otimes Q\cong Q$, we obtain {\small
\begin{center}
${\rm GKdim}(B\otimes C)={\rm GKdim}(Q\otimes (B\otimes C))={\rm
GKdim}((Q\otimes B)\otimes (Q\otimes C))={\rm GKdim}(Q\otimes
B)+{\rm GKdim}(Q\otimes C)={\rm GKdim}(B)+{\rm GKdim}(C)$.
\end{center}}
\noindent (ix) Let $W:=\,_R\langle
\frac{w_1}{s_1},\dots,\frac{w_t}{s_t}\rangle$ be a frame of
$BS^{-1}$; taking a common denominator $s$ we can assume
$W=\,_R\langle \frac{w_1}{s},\dots,\frac{w_t}{s}\rangle$. Then
$sW\subseteq\, _R\langle w_1,\dots,w_t\rangle\subseteq B$ and
observe that $V_W:=\,_R\langle 1,s,w_1,\dots,w_t\rangle$ is a
frame of $B$. For every $n\geq 0$, $W^n\subseteq
V_W^n\frac{1}{s^n}$ and since $Q$ is $R$-flat, then $Q\otimes
W^n\subseteq Q\otimes V_W^n\frac{1}{s^n}\cong Q\otimes V_W^n$
(isomorphism of $Q$-vector spaces), so $\dim_Q(Q\otimes W^n)\leq
\dim_Q(Q\otimes V_W^n)$, i.e., ${\rm rank}(W^n)\leq {\rm
rank}(V_W^n)$, whence ${\rm GKdim}(BS^{-1})\leq {\rm GKdim}(B)$.
Since $B\subseteq BS^{-1}$, (vi) completes the proof.

\noindent (x) We have the following isomorphism of $R$-algebras
\begin{align*}
B\otimes R[G] & \cong B[G]\\
b\otimes (\sum_{g\in G}r_g g) & \mapsto \sum_{g\in G}(b r_g)\cdot
g
\end{align*}
and from this we obtain the isomorphism of $Q$-algebras
\begin{center}
$Q\otimes B[G]\cong Q\otimes (B\otimes R[G])\cong (Q\otimes
B)\otimes R[G]\cong (Q\otimes B)[G]$.
\end{center}
Therefore,
\begin{center}
${\rm GKdim}(B[G])={\rm GKdim}(Q\otimes B[G])={\rm
GKdim}((Q\otimes B)[G])={\rm GKdim}(Q\otimes B)={\rm GKdim}(B)$.
\end{center}
(xi) $Q\otimes B$ has an induced natural $\mathbb{N}$-filtration
given by $\{Q\otimes F_p(B)\}_{p\in \mathbb{N}}$, moreover,
$Q\otimes B$ is locally finite. Since $Gr(B)$ is finitely
generated, then $Q\otimes Gr(B)$ is finitely generated. We have
the following isomorphism of $Q$-algebras:
\begin{center}
$Q\otimes Gr(B)\cong Gr(Q\otimes B)$, $q\otimes
\overline{b_p}\mapsto \overline{q\otimes b_p}$, with $q\in Q$ and
$b_p\in F_p(B)$
\end{center}
(this isomorphism is induced by the isomorphisms $Q\otimes
\frac{F_p(B)}{F_{p-1}(B)}\cong \frac{Q\otimes F_p(B)}{Q\otimes
F_{p-1}(B)}$). Hence, $Gr(Q\otimes B)$ is finitely generated and
we have
\begin{center}
${\rm GKdim}(Gr(B))={\rm GKdim}(Q\otimes Gr(B))={\rm
GKdim}(Gr(Q\otimes B))={\rm GKdim}(Q\otimes B)={\rm GKdim}(B)$.
\end{center}
(xii) We know that  $Q\otimes V$ generates the $Q$-algebra
$Q\otimes B$. Observe that
\begin{center}
$Q\otimes (B[x;\sigma,\delta])\cong (Q\otimes
B)[x;\overline{\sigma},\overline{\delta}]$
\end{center}
is an isomorphism of $Q$-algebras, where
$\overline{\sigma},\overline{\delta}:Q\otimes B\to Q\otimes B$ are
defined in the following natural way,
$\overline{\sigma}:=i_Q\otimes \sigma$,
$\overline{\delta}:=i_Q\otimes \delta$. It is easy to check that
$\overline{\sigma}$ is a bijective homomorphism of $Q$-algebras,
$\overline{\delta}$ is a $Q$-linear $\overline{\sigma}$-derivation
and $\overline{\sigma}(Q\otimes V)\subseteq Q\otimes V$.
Therefore,
\begin{center}
${\rm GKdim}(B[x;\sigma,\delta])={\rm GKdim}((Q\otimes
B)[x;\overline{\sigma},\overline{\delta}])={\rm GKdim}(Q\otimes
B)+1={\rm GKdim}(B)+1$.
\end{center}
(xiii) Note first that $A$ is a $R$-algebra. Let $V:=\,_R\langle
1,v_1,\dots,v_{l}\rangle$ be a generator frame of $B$, then
$\{V^n\}_{n\geq 0 }$ is a locally finite $\mathbb{N}$-filtration
of $B$, in particular, $B=\bigcup_{n\geq 0}V^n$. Let $m:={\rm
 max}\{m_{11},\dots,m_{tl}\}\geq 1$ with $\delta_i(v_j)\in V^{m_{ij}}$,
$1\leq i\leq t$ and $1\leq j\leq l$. Then, $\delta_i(V)\subseteq
V^m$ for every $1\leq i\leq t$. In addition, $m$ can be chosen
such that all parameters in (\ref{equation1.2.1}) belong to $V^m$.
By induction we can show that for $n\geq 0$,
$\delta_i(V^n)\subseteq V^{n+m}$. In fact,
$\delta_i(V^0)=\delta_i(R)=0\subseteq V^m$; $\delta_i(V)\subseteq
V^m\subseteq V^{m+1}$; assume that $\delta_i(V^n)\subseteq
V^{n+m}$ and let $z\in V^n$ and $v\in V$, then
$\delta_i(zv)=\sigma_i(z)\delta_i(v)+\delta_i(z)v$, but
$\sigma_i(z)\in V^n$, whence $\delta_i(zv)\in V^{n+1+m}$. Thus,
$\delta_i(V^{n+1})\subseteq V^{n+1+m}$. From this we obtain in
particular that $\delta_i(V^m)\subseteq V^{2m}$. Since $V^m$ is
also a generator frame, then we can assume that the generator
frame $V$ satisfies $\delta_i(V)\subseteq V^2$ and all parameters
in (\ref{equation1.2.1}) belong to $V$. From this, in turn, we
conclude that this generator frame $V=\,_R\langle
1,v_1,\dots,v_{l}\rangle$ satisfies
\begin{center}
$\delta_i(V^n)\subseteq V^{n+1}$, for every $n\geq 0$.
\end{center}
Let $X:=\,_R\langle 1,x_1,\dotsc,x_{t} \rangle$, and for every
$n\geq 1$ let
\begin{center}
$X_n:=\,_R\langle x^\alpha\in Mon(A)\mid \deg(x^\alpha)\leq
n\rangle$ (see Definition \ref{gpbwextension}).
\end{center}
Note that for every $n\geq 1$,
\begin{center}
$X_n\subseteq X^n\subseteq V^{n-1}X_n$.
\end{center}
The first inclusion is trivial and the second can be proved by
induction. Indeed, $X^1=X=X_1=RX_1=V^{1-1}X_1$; assume that
$X^{n-1}\subseteq V^{n-2}X_{n-1}$ and let $z\in X^n$, we can
suppose that $z$ has the form $z=z_1\cdots z_n$ with $z_i\in
\{1,x_1,\dots,x_t\}$, $1\leq i\leq n$. If at least one $z_i$ is
equal $1$, then $z\in X^{n-1}$, and by induction $z\in
V^{n-2}X_{n-1}\subseteq V^{n-1}X_n$. Thus, we can suppose that
every $z_i\in \{x_1,\dots,x_t\}$. If $z\in Mon(A)$, then $z\in
X_n\subseteq V^{n-1}X_n$. Assume that $z\notin Mon(A)$, then at
least one factor of $z$ should be moved in order to represent $z$
through the $B$-basis $Mon(A)$ of $A$. But the maximum number of
permutations in order to do this is $\leq n-1$ (and this is true
for every factor to be moved). Notice that in every permutation
arise the parameters of $A$, and as was observed above, these
parameters belongs to $V$. Hence, once the factor is in the right
position, we can apply induction and get that $z$, represented in
the standard form through the basis $Mon(A)$, belongs to
$V^{n-1}X_n$.

$X_n$ is a free left $R$-module with
\begin{equation*}
\dim_R X_n=\sum_{i=0}^n\binom{t+i-1}{i},
\end{equation*}
and for $n\ggg 0$
\begin{equation*}
(n/n-1)^t\leq \dim_R X_n\leq (n+1)^t.
\end{equation*}
Now we can complete the proof dividing it in two steps. For this,
let $W:=V+X$, then $W$ is a generating frame of $A$.

\textit{Step 1}. ${\rm GKdim}(A)\leq {\rm GKdim}(B)+t$. We will
show first that for every $n\geq 0$,
\begin{center}
$W^n\subseteq V^{n}X^{n}$.
\end{center}
For $n=0$, $W^0=R=V^0X^0$; for $n=1$, $W^1=V+X\subseteq VX$.
Suppose that $W^n\subseteq V^{n}X^{n}$ and let $w\in W$ and $z\in
W^n$, then denoting by $\delta$ any of elements of
$\{\delta_1,\dots,\delta_t\}$, we get $wz\in
(V+X)V^{n}X^{n}\subseteq V^{n+1}X^n+XV^nX^n\subseteq
V^{n+1}X^{n+1}+(V^nX+\delta(V^n))X^n\subseteq
V^{n+1}X^{n+1}+V^nX^{n+1}+V^{n+1}X^n\subseteq V^{n+1}X^{n+1}$.
Thus, $W^{n+1}\subseteq V^{{n+1}}X^{{n+1}}$.

Hence, ${\rm rank}W^{n}\leq {\rm rank}V^{n}X^{n}\leq {\rm
rank}V^{n}V^{n-1}X_n={\rm rank}V^{2n-1}X_n\leq {\rm
rank}V^{2n}X_n$, but since $V^{2n}\subseteq B$ and the $R$-basis
of $X_n$ is conformed by standard monomials with $\dim_R X_n\leq
(n+1)^t$, then ${\rm rank}V^{2n}X_n\leq (n+1)^t{\rm rank}V^{2n}$.
In fact, let $l:=\dim_R X_n$ and
$\{x^{\alpha_1},\dots,x^{\alpha_l}\}$ be a $R$-basis of $X_n$,
then we have the $R$-isomorphism
\begin{center}
$V^{2n}\oplus \cdots \oplus V^{2n}\to V^{2n}X_n$

$(b_1,\dots,b_l)\mapsto b_1x^{\alpha_1}+\cdots+b_lx^{\alpha_l}$,
\end{center}
and hence ${\rm rank}V^{2n}X_n={\rm rank}(V^{2n}\oplus \cdots
\oplus V^{2n})=l{\rm rank}V^{2n}\leq (n+1)^t{\rm rank}V^{2n}$.

Therefore,
\begin{center}
${\rm GKdim}(A)=\overline{\lim_{n\to \infty}}\log_n({\rm rank}
W^{n})\leq \overline{\lim_{n\to \infty}}\log_n({\rm rank}
V^{2n}X_n)\leq \overline{\lim_{n\to
\infty}}\log_n((n+1)^t\dim_Q(Q\otimes
V^{2n}))=t+\overline{\lim_{n\to \infty}}\log_n(\dim_Q(Q\otimes
V^n))=t+\overline{\lim_{n\to \infty}}\log_n({\rm rank}V^n)=t+{\rm
GKdim}(B)$.
\end{center}
\textit{Step 2}. ${\rm GKdim}(A)\geq {\rm GKdim}(B)+t$. Observe
that for every $n\geq 0$,

\begin{center}
$V^{n}X^n\subseteq W^{2n}$.
\end{center}
In fact, $V^0X^0=R=W^0$ and for $n\geq 1$, $V^nX^n\subseteq
(V+X)^n(V+X)^n=W^{2n}$. Therefore, $W^{2n}\supseteq
V^{n}X^n\supseteq V^{n}X_n$, and as in the step 1, we get ${\rm
rank}W^{2n}\geq {\rm rank}V^{n}X_{n}\geq (n/n-1)^t{\rm
rank}V^{n}$, whence
\begin{center}
${\rm GKdim}(A)=\overline{\lim_{n\to \infty}}\log_n({\rm rank}
W^{2n})\geq \overline{\lim_{n\to \infty}}\log_n({\rm rank}
V^{n}X_n)\geq\overline{\lim_{n\to
\infty}}\log_n((n/n-1)^t\dim_Q(Q\otimes
V^n))=t+\overline{\lim_{n\to \infty}}\log_n(\dim_Q(Q\otimes
V^n))=t+\overline{\lim_{n\to \infty}}\log_n({\rm rank}V^n)=t+{\rm
GKdim}(B)$.
\end{center}
\end{proof}

\begin{remark}
Comparing with the classical Gelfand-Kirillov dimension over
fields (see \cite{Krause}), we have the followings remarks about
Definition \ref{definitionGKforrings}.

(i) If $V$ is a generator frame of $B$, then $\{V^n\}_{n\geq 0}$
is a $\mathbb{N}$-filtration of $B$. Note that ${\rm GKdim}(B)$
measures the asymptotic behavior of the sequence $\{{\rm rank}
V^n\}_{n\geq 0}$.

(ii) From Proposition \ref{proposition17.3.8} we get that
\begin{center}
${\rm GKdim}(B)=r$ if and only if $r\in \{0\}\cup \{1\}\cup
[2,\infty]$.
\end{center}
Indeed, let $r:={\rm GKdim}(B)={\rm GKdim}(Q\otimes B)$. Then, it
is well-known (see \cite{Krause}) that $r\in \{0\}\cup \{1\}\cup
[2,\infty]$. Conversely, suppose that $r$ is in this union, then
there exists a $Q$-algebra $A$ such that ${\rm GKdim}(A)=r$.
Notice that $A$ is a $R$-algebra. Let $X$ be a $Q$-basis of $A$
and let $B$ be the $R$-subalgebra of $A$ generated by $X$. We have
the surjective homomorphism of $Q$-algebras
\begin{center}
$Q\otimes B\xrightarrow{\alpha} A$, $q\otimes b\mapsto q\cdot b$,
with $q\in Q$ and $b\in B$.
\end{center}
Hence, $A\cong (Q\otimes B)/\ker(\alpha)$, so $r={\rm
GKdim}(A)\leq {\rm GKdim}(Q\otimes B)={\rm GKdim}(B)$. On the
other hand, since $Q$ is $R$-flat we have the injective
homomorphism of $Q$-algebras $Q\otimes B\hookrightarrow Q\otimes
A$, and moreover, $Q\otimes A\cong A$, with $q\otimes a\mapsto
q\cdot a$ (isomorphism of $Q$-algebras). Therefore, $Q\otimes B$
is a $Q$-subalgebra of $A$, and hence, ${\rm GKdim}(B)={\rm
GKdim}(Q\otimes B)\leq {\rm GKdim}(A)=r$. Thus, ${\rm
GKdim}(B)=r$.

(iii) If $B$ is finitely generated and commutative, then ${\rm
GKdim}(B)$ is a nonnegative integer. Indeed, this property is
well-known (see \cite{Krause}, Theorem 4.5) for commutative
finitely generated algebras over fields since in such situation
${\rm GKdim}={\rm cl.Kdim}$. Hence, since $Q\otimes B$ is finitely
generated and commutative, then ${\rm GKdim}(B)={\rm
GKdim}(Q\otimes B)$ is a nonnegative integer.
\end{remark}

\section{Gelfand-Kirillov dimension for modules}

Definition \ref{definitionGKforrings} can be extended to modules.
Let $M$ be a right $B$-module, then $M$ is a $R-B$-bimodule:
Indeed, for $r\in R$, $m\in M$ and $b\in B$ we define $r\cdot
m:=m\cdot (r\cdot 1_B)$, and from this it is easy to check that
$(r\cdot m )\cdot b=r\cdot (m\cdot b)$.

\begin{definition}
Let $B$ a $R$-algebra and $M$ be a right $B$-module. The
\textit{Gelfand-Kirillov dimension of $M$} is defined by
\begin{equation*}
{\rm GKdim}(M):=\sup_{V,F}\overline{\lim_{n\to \infty}}\log_n{\rm
rank} FV^n,
\end{equation*}
where $V$ varies over all frames of $B$ and $F$ over all finitely
generated $R$-submodules of $M$. In addition, ${\rm
GKdim}(0):=-\infty$.
\end{definition}
Notice that $FV^n$ is a finitely generated $R$-submodule of $M$:
This follows from the identity $(r\cdot m)\cdot (r'\cdot
b)=(rr')\cdot(m\cdot b)$, with $r,r'\in R$, $m\in M$ and $b\in B$.
Moreover, $Q\otimes M$ is a right module over the $Q$-algebra
$Q\otimes B$, with product
\begin{center}
$(q\otimes m)\cdot (q'\otimes b):=(qq')\otimes (m\cdot b)$, where
$q,q'\in Q$, $m\in M$ and $b\in B$.
\end{center}

\begin{proposition}\label{proposition17.4.7}
Let $B$ a $R$-algebra and $M$ be a right $B$-module. Then,
\begin{center}
${\rm GKdim}(M)={\rm GKdim}(Q\otimes M)$.
\end{center}
\end{proposition}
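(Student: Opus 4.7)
The plan is to mimic the strategy of Proposition \ref{proposition17.3.8}, but now working simultaneously with frames $V$ of $B$ and finitely generated $R$-submodules $F$ of $M$. The key compatibility underlying the whole argument is the identity
\begin{equation*}
Q\otimes (FV^n)=(Q\otimes F)(Q\otimes V)^n
\end{equation*}
inside $Q\otimes M$, together with the observation that $Q\otimes F$ is a finite-dimensional $Q$-subspace of $Q\otimes M$ and $Q\otimes V$ is a frame of $Q\otimes B$. Both follow from $R$-flatness of $Q$ and the bimodule compatibility $(r\cdot m)\cdot (r'\cdot b)=(rr')\cdot(m\cdot b)$ noted just before the statement.

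For the inequality ${\rm GKdim}(M)\leq {\rm GKdim}(Q\otimes M)$, I would fix a frame $V=\,_R\langle v_1,\dots,v_m\rangle$ of $B$ and a finitely generated submodule $F=\,_R\langle f_1,\dots,f_s\rangle$ of $M$. Then $Q\otimes V=\,_Q\langle 1\otimes v_1,\dots,1\otimes v_m\rangle$ is a frame of $Q\otimes B$ and $Q\otimes F=\,_Q\langle 1\otimes f_1,\dots,1\otimes f_s\rangle$ is a finite-dimensional $Q$-subspace of $Q\otimes M$. The displayed identity gives
\begin{equation*}
{\rm rank}(FV^n)=\dim_Q\bigl(Q\otimes (FV^n)\bigr)=\dim_Q\bigl((Q\otimes F)(Q\otimes V)^n\bigr),
\end{equation*}
and taking $\overline{\lim}_{n\to\infty}\log_n$ and then the supremum on both sides yields the desired inequality.

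For the reverse inequality, given any frame $W$ of $Q\otimes B$ and any finite-dimensional $Q$-subspace $F'$ of $Q\otimes M$, I would first use the construction from Proposition \ref{proposition17.3.8} to produce a frame $V_W=\,_R\langle 1,v_1,\dots,v_m\rangle$ of $B$ with $W\subseteq Q\otimes V_W$. Then, applying the analogous idea to $F'$: writing a $Q$-basis $z_1,\dots,z_k$ of $F'$ as $z_i=\sum_j q_{ij}\otimes m_{ij}$ with $q_{ij}\in Q$ and $m_{ij}\in M$, I set $F_{F'}:=\,_R\langle m_{ij}\mid 1\leq i\leq k,\ j\rangle$, a finitely generated $R$-submodule of $M$ satisfying $F'\subseteq Q\otimes F_{F'}$. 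Combining the inclusions,
\begin{equation*}
F'W^n\subseteq (Q\otimes F_{F'})(Q\otimes V_W)^n=Q\otimes (F_{F'}V_W^n),
\end{equation*}
which gives $\dim_Q(F'W^n)\leq \dim_Q\bigl(Q\otimes (F_{F'}V_W^n)\bigr)={\rm rank}(F_{F'}V_W^n)$; the usual passage to the $\overline{\lim}$ and suprema then delivers ${\rm GKdim}(Q\otimes M)\leq {\rm GKdim}(M)$.

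The only real obstacle is verifying the identity $Q\otimes(FV^n)=(Q\otimes F)(Q\otimes V)^n$ and checking that the right $(Q\otimes B)$-module structure on $Q\otimes M$ obtained by tensoring agrees with the one defined in the paragraph preceding the statement; once this is in place, the two inequalities are parallel to those in Proposition \ref{proposition17.3.8}, with the submodule $F$ (resp.\ $F'$) playing the role that $1$ played there.
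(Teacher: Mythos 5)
Your proposal is correct and follows essentially the same route as the paper's own proof: the first inequality via the identity $Q\otimes(FV^n)=(Q\otimes F)(Q\otimes V)^n$, and the reverse by clearing denominators to produce a frame $V_W$ of $B$ and a finitely generated $R$-submodule of $M$ containing the given finite-dimensional subspace after tensoring. The paper likewise asserts the key tensor identity without further verification, so your remaining ``obstacle'' is exactly the routine flatness check the paper also leaves implicit.
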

\begin{proof}
Let $V$ be a frame of $B$ and $F$ be a finitely generated
$R$-submodule of $M$, then $Q\otimes F$ is a finitely generated
$Q$-vector subspace of the right $Q\otimes B$-module $Q\otimes M$,
and $Q\otimes V$ is a frame of $Q\otimes B$. In addition,
$(Q\otimes F)(Q\otimes V)^n=Q\otimes FV^n$ is a finitely generated
$Q$-subspace of $Q\otimes M$. Therefore, {\small
\begin{center}
${\rm GKdim}(M)=\sup_{V,F}\overline{\lim_{n\to \infty}}\log_n{\rm
rank} FV^n=\sup_{V,F}\overline{\lim_{n\to
\infty}}\log_n(\dim_Q(Q\otimes
FV^n))=\sup_{V,F}\overline{\lim_{n\to
\infty}}\log_n(\dim_Q((Q\otimes F)(Q\otimes V)^n))\leq {\rm
GKdim}(Q\otimes M)$.
\end{center}}
Now let $W$ be a frame of $Q\otimes B$, we showed in the proof of
Proposition \ref{proposition17.3.8} that there exist finitely many
$v_1,\dots,v_m\in B$ such that $V_W:=\,_R\langle
1,v_1,\dots,v_m\rangle$ is a frame of $B$ and $W\subseteq Q\otimes
V_W$. Similarly, if $G$ is a $Q$-subspace of $Q\otimes M$ of
finite dimension, then there exist finitely many $m_1,\dots,m_t\in
M$ such that $F_G:=\,_R\langle m_1,\dots,m_t\rangle$ satisfies
$G\subseteq Q\otimes F_G$. Moreover, for every $n\geq 0$,
\begin{center}
$GW^n\subseteq (Q\otimes F_G)(Q\otimes V_W)^n=(Q\otimes
F_G)(Q\otimes V_W^n)=Q\otimes F_GV_W^n$.
\end{center}
Therefore,
\begin{center}
${\rm GKdim}(Q\otimes M)=\sup_{W,G}\overline{\lim_{n\to
\infty}}\log_n\dim_Q GW^n\leq \sup_{V_W,F_G}\overline{\lim_{n\to
\infty}}\log_n\dim_Q (Q\otimes
F_GV_W^n)=\sup_{V_W,F_G}\overline{\lim_{n\to \infty}}\log_n{\rm
rank} F_GV_W^n\leq {\rm GKdim}(M)$.
\end{center}
\end{proof}
In the next theorem we present some basic properties of the
Gelfand-Kirillov dimension of modules.

\begin{theorem}\label{theoremformodules}
Let $B$ a $R$-algebra and $M$ be a right $B$-module. Then,
\begin{enumerate}
\item[\rm (i)]${\rm GKdim}(B_B)={\rm GKdim}(B)$.
\item[\rm (ii)]${\rm GKdim}(M)\leq {\rm GKdim}(B)$.
\item[\rm (iii)]Let $0\to K\to M\ \to L\to 0$ be an exact sequence of right $B$-modules,
then
\begin{center}
${\rm GKdim}(M)\geq \max\{{\rm GKdim}(K),{\rm GKdim}(L)\}$.
\end{center}
\item[\rm (iv)]Let $I$ be a two-sided ideal of $B$ and $MI=0$, then
\begin{center}
${\rm GKdim}(M_B)={\rm GKdim}(M_{B/I})$.
\end{center}
\item[\rm (v)]${\rm GKdim}(\sum_{i=1}^n M_i)=\max\{{\rm
GKdim}(M_i)\}_{i=1}^n={\rm GKdim}(\bigoplus_{i=1}^n M_i)$.
\end{enumerate}
\end{theorem}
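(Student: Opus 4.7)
The plan is to reduce each of (i)--(v) to the corresponding statement for modules over $Q$-algebras (which are classical and can be found in \cite{Krause}), via Proposition \ref{proposition17.4.7} and the flatness of $Q$ over $R$.

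For (i), observe that $B_B$ viewed as right module gives $Q\otimes B_B$ which is canonically isomorphic to $(Q\otimes B)_{Q\otimes B}$ as a right $(Q\otimes B)$-module. By Proposition \ref{proposition17.4.7} and the classical identity ${\rm GKdim}((Q\otimes B)_{Q\otimes B})={\rm GKdim}(Q\otimes B)$, combined with Proposition \ref{proposition17.3.8}, the claim follows. For (ii), apply Proposition \ref{proposition17.4.7} to $M$ and Proposition \ref{proposition17.3.8} to $B$, and use the classical inequality ${\rm GKdim}(N) \leq {\rm GKdim}(A)$ for right modules over a $Q$-algebra $A$.

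For (iii), since $Q$ is $R$-flat, tensoring the exact sequence $0\to K\to M\to L\to 0$ with $Q$ gives an exact sequence $0\to Q\otimes K\to Q\otimes M\to Q\otimes L\to 0$ of right $Q\otimes B$-modules. Then the classical version of (iii) over the $Q$-algebra $Q\otimes B$ yields ${\rm GKdim}(Q\otimes M)\geq \max\{{\rm GKdim}(Q\otimes K),{\rm GKdim}(Q\otimes L)\}$, and Proposition \ref{proposition17.4.7} applied to each of $K$, $M$, $L$ translates this back to the desired inequality. For (iv), note that $M$ becomes naturally a right $B/I$-module because $MI=0$; moreover $(Q\otimes M)(Q\otimes I)\subseteq Q\otimes MI=0$, and the canonical isomorphism $Q\otimes (B/I)\cong (Q\otimes B)/(Q\otimes I)$ (again by flatness of $Q$) identifies the module structures. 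The corresponding classical identity over $Q$-algebras, together with Proposition \ref{proposition17.4.7}, closes the argument.

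For (v), it suffices to handle $n=2$ and iterate. Since $Q$ commutes with finite direct sums, $Q\otimes(M_1\oplus M_2)\cong (Q\otimes M_1)\oplus (Q\otimes M_2)$ as right $Q\otimes B$-modules, and similarly for finite sums inside a single module. The classical result over the $Q$-algebra $Q\otimes B$ then gives the equality, which transports back via Proposition \ref{proposition17.4.7}.

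The main obstacle I expect is purely bookkeeping: verifying carefully in (iv) that the $R$-bimodule structure on $M$ (defined via $r\cdot m:=m\cdot(r\cdot 1_B)$) and the induced $B/I$-action are compatible, so that the tensor product identifications respect the module structures on both sides. All remaining steps are essentially formal once Proposition \ref{proposition17.4.7}, the flatness of $Q$, and the cited classical facts over fields are in hand.
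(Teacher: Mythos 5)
Your proposal is correct, and for parts (ii), (iii) and (v) it is exactly the paper's argument: tensor with $Q$, use flatness, and quote the classical facts for modules over the $Q$-algebra $Q\otimes B$ together with Proposition \ref{proposition17.4.7}. Where you genuinely diverge is in (i) and (iv), which are the only parts where the paper does real work. For (i) the paper argues directly with frames over $R$: from $V^n\subseteq VV^n$ it gets ${\rm GKdim}(B)\leq{\rm GKdim}(B_B)$, and from $FV^n\subseteq (V+F)^{n+1}$ it gets the reverse inequality; you instead reduce to the classical identity ${\rm GKdim}\bigl((Q\otimes B)_{Q\otimes B}\bigr)={\rm GKdim}(Q\otimes B)$ via Propositions \ref{proposition17.4.7} and \ref{proposition17.3.8}. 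For (iv) the paper again works at the $R$-level, matching frames of $B$ and $B/I$ and exhibiting explicit mutually inverse $Q$-linear maps between $Q\otimes F_GV_W^n$ and $Q\otimes G\overline{V_W^n}$ (well-definedness of one direction using $MI=0$); you instead pass to $Q\otimes M$ over $(Q\otimes B)/(Q\otimes I)\cong Q\otimes(B/I)$ and invoke the field-case change-of-rings fact for annihilated modules. Your route is shorter and more uniform, but it leans on two classical module-level facts over fields (${\rm GKdim}(A_A)={\rm GKdim}(A)$, and invariance under passing to $A/J$ when the module is killed by $J$) that you should cite or prove explicitly (they are standard, cf.\ Chapter 5 of \cite{Krause}, and their proofs are essentially the frame manipulations the paper carries out over $R$); the paper's version buys self-containedness at the cost of length. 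Your bookkeeping worry about the $R$-action in (iv) is indeed the only point needing care, and it resolves exactly as you expect since the $B/I$-action is induced by the $B$-action, so $r\cdot m=m\cdot(r\cdot 1_B)$ gives the same underlying $R$-module.
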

\begin{proof}
(i) let $V$ be a frame of $B$, then for every $n\geq 0$,
$V^n\subseteq V^{n+1}=VV^n$, hence
\begin{equation*}
{\rm GKdim}(B)=\sup_{V}\overline{\lim_{n\to \infty}}\log_n{\rm
rank} V^n\leq \sup_{V,V}\overline{\lim_{n\to \infty}}\log_n{\rm
rank} VV^{n}\leq {\rm GKdim}(B_B).
\end{equation*}
On the other hand, if $F$ is a finitely generated $R$-submodule of
$B$, then for every frame $V$ of $B$ we have that $W_V:=V+F$ is a
frame of $B$, moreover, for every $n$,
$W_V^{n+1}=(V+F)^{n+1}\supseteq FV^n$, so
\begin{equation*}
{\rm GKdim}(B_B)=\sup_{V,F}\overline{\lim_{n\to \infty}}\log_n{\rm
rank}FV^n\leq \sup_{W_V}\overline{\lim_{n\to \infty}}\log_n{\rm
rank}W_V^{n+1}\leq {\rm GKdim}(B).
\end{equation*}
(ii) From Proposition \ref{proposition17.4.7} and (i) we get
\begin{center}
${\rm GKdim}(M)={\rm GKdim}(Q\otimes M)\leq {\rm GKdim}(Q\otimes
B)={\rm GKdim}(B)$.
\end{center}
(iii) Since $Q$ is a $R$-flat module we get the following exact
sequence of $Q\otimes B$-modules $0\to Q\otimes K\to Q\otimes M\
\to Q\otimes L\to 0$, whence
\begin{center}
${\rm GKdim}(M)={\rm GKdim}(Q\otimes M)\geq \max\{{\rm
GKdim}(Q\otimes K),{\rm GKdim}(Q\otimes L)\}=\max\{{\rm
GKdim}(K),{\rm GKdim}(L)\}$.
\end{center}
(iv) As in the part (v) of Theorem \ref{theorem17.4.4}, let $W$ be
a frame of $B/I$, we can assume that $W=\,_R\langle
\overline{1},\overline{w_1},\dots,\overline{w_t}\rangle$, then
$V_W:=\,_R\langle 1, w_1,\dots,w_t\rangle$ is a frame of $B$. Let
$G$ be a finitely generated $R$-submodule of $M_{B/I}$, then
$F_G:=G$ is also a finitely generated $R$-submodule of $M_B$. For
every $n\geq 0$ we have $\dim_Q(Q\otimes GW^n)=\dim_Q(Q\otimes
G\overline{V_W^n})=\dim_Q(Q\otimes F_GV_W^n)$. The last equality
in this case can by justified in the following way. The $Q$-vector
spaces $Q\otimes G\overline{V_W^n}$ and $Q\otimes F_GV_W^n$ have
finite dimension and we have the following homomorphisms of
$Q$-vector spaces:
\begin{center}
$Q\otimes F_GV_W^n\to Q\otimes G\overline{V_W^n}$, $q\otimes
g\cdot z\mapsto q\otimes g\cdot \overline{z}$, with $q\in Q$,
$g\in F_G$ and $z\in V_W^n$,

\smallskip

$Q\otimes G\overline{V_W^n}\to Q\otimes F_GV_W^n$, $q\otimes
g\cdot \overline{z}\mapsto q\otimes g\cdot z$, with $q\in Q$,
$g\in G$ and $z\in V_W^n$.
\end{center}
The last homomorphism is well-defined since $MI=0$. It is clear
that the composes of these homomorphisms give the identities.
Hence,
\begin{center}
${\rm GKdim}(M_{B/I})=\sup_{W,G}\overline{\lim_{n\to
\infty}}\log_n{\rm rank} GW^n=\sup_{W,G}\overline{\lim_{n\to
\infty}}\log_n\dim_Q (Q\otimes GW^n)=
\sup_{V_W,F_G}\overline{\lim_{n\to \infty}}\log_n\dim_Q (Q\otimes
F_GV_W^n)=\sup_{V_W,F_G}\overline{\lim_{n\to \infty}}\log_n{\rm
rank} F_GV_W^n\leq {\rm GKdim}(M_B)$.
\end{center}
Conversely, let $V:=\,_R\langle 1, v_1,\dots,v_t\rangle$ be a
frame of $B$, then $W_V:=\,_R\langle \overline{1},
\overline{v_1},\dots,\overline{v_t}\rangle$ is a frame of $B/I$;
let $F$ be a finitely generated $R$-submodule of $M_B$, then
$G_F:=F$ is a finitely generated $R$-submodule of $M_{B/I}$. As
above, for every $n\geq 0$ we have $\dim_Q(Q\otimes FV^n)=
\dim_Q(Q\otimes G_FW_V^n)$. Hence,
\begin{center}
${\rm GKdim}(M_{B})=\sup_{V,F}\overline{\lim_{n\to
\infty}}\log_n{\rm rank} FV^n=\sup_{V,F}\overline{\lim_{n\to
\infty}}\log_n\dim_Q (Q\otimes FV^n)=
\sup_{W_V,G_F}\overline{\lim_{n\to \infty}}\log_n\dim_Q (Q\otimes
G_FW_V^n)=\sup_{W_V,G_F}\overline{\lim_{n\to \infty}}\log_n{\rm
rank} G_FW_V^n\leq {\rm GKdim}(M_{B/I})$.
\end{center}
Therefore, ${\rm GKdim}(M_{B})={\rm GKdim}(M_{B/I})$.

(v) The equalities can be proved tensoring by $Q$.
\end{proof}

\section{Gelfand-Kirillov transcendence degree}

In \cite{GK} was defined the Gelfand-Kirillov transcendence degree
for algebras over fields (see also \cite{Zhangtranscendence}).
This notion can be extended to algebras over commutative domains.

\begin{definition}
Let $B$ a $R$-algebra. The \textit{Gelfand-Kirillov transcendence
degree} of $B$ is defined by
\[
{\rm Tdeg}(B):=\sup_{V}\inf_{b}{\rm GKdim}(R[bV]),
\]
where $V$ ranges over all frames of $B$ and $b$ ranges over all
regular elements of $B$.
\end{definition}

\begin{theorem}\label{theorem4.2}
Let $B$ a $R$-algebra. Then,
\begin{enumerate}
\item[\rm (i)]${\rm Tdeg}(B)\leq {\rm GKdim}(B)$.
\item[\rm (ii)]If $B$ is commutative, then
${\rm Tdeg}(B)={\rm GKdim}(B)$.
\end{enumerate}
\end{theorem}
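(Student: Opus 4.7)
The plan for part (i) is to apply the subalgebra property directly. For any frame $V$ of $B$ and any regular element $b\in B$, the $R$-subalgebra $R[bV]$ is contained in $B$, so Theorem~\ref{theorem17.4.4}(vi) yields ${\rm GKdim}(R[bV])\leq {\rm GKdim}(B)$. Taking the infimum over $b$ and then the supremum over $V$ gives ${\rm Tdeg}(B)\leq {\rm GKdim}(B)$; no further argument is needed.

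For part (ii), in view of (i) it suffices to prove the reverse inequality ${\rm Tdeg}(B)\geq {\rm GKdim}(B)$ under the assumption that $B$ is commutative. The idea is to exhibit, inside a frame of $R[bV]$, an $R$-submodule isomorphic to $V^n$, so that the growth of $V^n$ is recovered no matter which regular $b$ is chosen.

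Fix a frame $V$ of $B$ and a regular $b\in B$, and consider the frame $W:=\,_R\langle 1,bV\rangle=R+RbV$ of $R[bV]$. Commutativity of $B$ gives $(bV)^k=b^kV^k$ for every $k\geq 0$, so
\[
W^n=\sum_{k=0}^n R\,(bV)^k=\sum_{k=0}^n b^kV^k,
\]
and in particular $b^nV^n\subseteq W^n$. Since $R$ is central in $B$, the map $V^n\to b^nV^n$, $v\mapsto b^nv$, is $R$-linear and surjective, and its kernel is trivial because $b$ is regular; hence $V^n\cong b^nV^n$ as $R$-modules. Using that $Q$ is $R$-flat, this yields ${\rm rank}(V^n)={\rm rank}(b^nV^n)\leq {\rm rank}(W^n)$, and therefore
\[
{\rm GKdim}(R[bV])\geq \overline{\lim_{n\to\infty}}\log_n{\rm rank}\,W^n\geq \overline{\lim_{n\to\infty}}\log_n{\rm rank}\,V^n.
\]
Taking the infimum over $b$ and then the supremum over $V$ gives ${\rm Tdeg}(B)\geq \sup_V\overline{\lim_{n\to\infty}}\log_n{\rm rank}\,V^n={\rm GKdim}(B)$; combined with (i) this yields equality.

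The main (and essentially only) conceptual step is the identity $(bV)^k=b^kV^k$, which allows the ``top'' summand of $W^n$ to be identified with $V^n$ via the regular multiplier $b^n$. Without commutativity this rearrangement fails and the inequality ${\rm rank}(W^n)\geq {\rm rank}(V^n)$ need not hold, which is precisely why the argument produces only an inequality in the general case of (i).
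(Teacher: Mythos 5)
Your argument is correct, and part (i) coincides with the paper's proof (the subalgebra property of Theorem \ref{theorem17.4.4}(vi) applied to $R[bV]\subseteq B$). For part (ii), however, you take a genuinely different route. The paper does not argue directly over $R$: it base-changes to $Q$, invokes the known field-case equality ${\rm Tdeg}(Q\otimes B)={\rm GKdim}(Q\otimes B)$ for commutative $Q$-algebras (Zhang, Proposition 2.2), and then proves the transfer inequality ${\rm Tdeg}(Q\otimes B)\leq {\rm Tdeg}(B)$ by showing that every frame $W$ of $Q\otimes B$ sits inside $Q\otimes V_W$ for a suitable frame $V_W$ of $B$ (as in the proof of Proposition \ref{proposition17.3.8}) and that regular elements of the form $1\otimes b$ stay regular in $Q\otimes B\cong BS_0^{-1}$; combining this with Proposition \ref{proposition17.3.8} and part (i) closes the loop. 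You instead inline the classical commutative argument at the level of $R$-modules: with $W=R+RbV$, commutativity gives $b^nV^n\subseteq W^n$, regularity of $b$ gives $V^n\cong b^nV^n$, and flatness of $Q$ gives ${\rm rank}\,V^n\leq{\rm rank}\,W^n$, so ${\rm GKdim}(R[bV])\geq \overline{\lim}_n\log_n{\rm rank}\,V^n$ uniformly in $b$, yielding ${\rm Tdeg}(B)\geq{\rm GKdim}(B)$. Each step checks out (note $1$ is always regular, so the infimum is over a nonempty set, and only the inclusion $b^nV^n\subseteq W^n$ from your identity $(bV)^k=b^kV^k$ is actually needed). What your approach buys is self-containedness: it avoids citing the field-case theorem and works directly with ranks over $R$. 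What the paper's approach buys is brevity given the cited result, consistency with the base-change strategy used throughout the paper, and the intermediate inequality ${\rm Tdeg}(Q\otimes B)\leq{\rm Tdeg}(B)$, which has independent interest beyond the commutative case.
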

\begin{proof}
Since $R[bV]$ is a $R$-subalgebra of $B$, then ${\rm
GKdim}(R[bV])\leq {\rm GKdim}(B)$ for all $V$ and $b$, whence
\begin{center}
${\rm Tdeg}(B)\leq {\rm GKdim}(B)$.
\end{center}
If $B$ is commutative, then $Q\otimes B$ is commutative and it is
known that for commutative algebras over fields the equality
holds, therefore, ${\rm Tdeg}(Q\otimes B)={\rm GKdim}(Q\otimes B)$
(see \cite{Zhangtranscendence}, Proposition 2.2). But note that
${\rm Tdeg}(Q\otimes B)\leq {\rm Tdeg}(B)$. In fact,
\[
{\rm Tdeg}(Q\otimes B):=\sup_{W}\inf_{z}{\rm GKdim}(Q[zW])\leq
\sup_{W}\inf_{u}{\rm GKdim}(Q[uW]),
\]
where $W$ ranges over all frames of $Q\otimes B$, $z$ ranges over
all regular elements of $Q\otimes B$ and $u$ over all regular
elements of $Q\otimes B$ of the form $1\otimes b$, with $b$ any
regular element of $B$ (if $b$ is a regular element of $B$, then
$1\otimes b$ is a regular element of $Q\otimes B$: Indeed, this
follows from the fact that $Q\otimes B\cong BS_0^{-1}$, with
$S_0:=R-\{0\}$, $1\otimes b\mapsto \frac{b}{1}$). As we saw in the
proof of Proposition \ref{proposition17.3.8}, given a frame $W$
there exists a frame $V_W$ of $B$ such that $W\subseteq Q\otimes
V_W$, hence $uW\subseteq u(Q\otimes V_W)=Q\otimes bV_W\subseteq
Q\otimes R[bV_W]$, whence $Q[zW]\subseteq Q\otimes R[bV_W]$, and
from this
\[
{\rm Tdeg}(Q\otimes B)\leq \sup_{V_W}\inf_{b}{\rm GKdim}(Q\otimes
R[bV_W])=\sup_{V_W}\inf_{b}{\rm GKdim}(R[bV_W])\leq {\rm Tdeg}(B).
\]
This proves the claimed.
\end{proof}

\begin{remark}
(i) Taking $R=\mathbb{Z}$ in Definition \ref{definitionGKforrings}
we get the notion of Gelfand-Kirillov dimension for arbitrary
rings, and hence, all properties in Theorems \ref{theorem17.4.4},
\ref{theoremformodules} and \ref{theorem4.2} hold in this
particular situation.

(ii) The proof of Theorem 2.2 in \cite{LezamaHelbert3} can be easy
adapted to the case of algebras over commutative domains. Indeed,
we can replace vector subspaces over the field $K$ and its
dimension by finitely generated submodules and its rank over the
commutative domain $R$. Thus, Theorem 2.2 in \cite{LezamaHelbert3}
can be extended in the following way: Let $R$ be a commutative
domain and $A$ be a right Ore domain. If $A$ is a finitely
generated $R$-algebra such that ${\rm GKdim}(A)<{\rm
GKdim}(Z(A))+1$, then
\begin{center}
$Z(Q_r(A))=\{\frac{p}{q}\mid p,q\in Z(A), q\neq 0\}\cong Q(Z(A))$.
\end{center}
\end{remark}



\end{document}